\newtheorem{assumption}[theorem]{Assumption}
\newtheorem{idealization}[theorem]{Idealization}
\newcommand{\R}{\mathbb R}
\newtheorem{algorithm}[theorem]{Algorithm}
\newcommand{\PP}{\mathbb P}
\newcommand{\E}{\mathbb E}
\newcommand{\abs}[1]{\left| #1\right|}
\newcommand{\bracket}[1]{\left[#1\right]}
\newcommand{\set}[1]{\left\{#1\right\}}
\newcommand{\tcorr}{T_{\rm corr}}
\newcommand{\tphase}{T_{\rm phase}}
\newcommand{\tpoll}{T_{\rm poll}}
\newcommand{\tsim}{T_{\rm sim}}
\newcommand{\tpar}{T_{\rm par}}
\newcommand{\refe}{{\rm ref}}
\newcommand{\tacc}{T_{\rm acc}}
\newcommand{\xacc}{X_{\rm acc}}
\newcommand{\algorithmbox}[1]{\fcolorbox{black}[HTML]{E9F0E9}{\parbox{\textwidth}{#1}}}
\begin{document}

\runningheads{D. Aristoff, T. Leli\`evre, and G. Simpson}{ParRep for
  simulating Markov chains}

\title{The parallel replica method for simulating long trajectories of
  Markov chains}

\author{David Aristoff\affil{a}\corrauth, Tony Leli\`evre\affil{b},
  and Gideon Simpson\affil{c}}

\address{\affilnum{a}School of Mathematics, University of Minnesota\\
  \affilnum{b}CERMICS, \'Ecole des Ponts ParisTech\\
  \affilnum{c}Department of Mathematics, Drexel University}
\corraddr{daristof@umn.edu}

\begin{abstract}
  The parallel replica dynamics, originally developed by
  A.F. Voter, efficiently simulates very long trajectories of
  metastable Langevin dynamics. 
  We present an analogous algorithm for discrete time
  Markov processes. Such Markov processes naturally arise, for example,
  from the time discretization of a continuous time stochastic dynamics.
Appealing to properties of quasistationary
  distributions, we show that our algorithm
  reproduces exactly (in some limiting regime) the law of the original trajectory, coarsened over the
  metastable states. 
\end{abstract}

\keywords{Markov chain, parallel computing, parallel replica dynamics,
  quasistationary distributions, metastability}

\received{XXX}

\maketitle

\section{Introduction}
\label{s:intro}

We consider the problem of efficiently simulating time homogeneous
Markov chains with {\it metastable states}: subsets of state 
space in which the Markov chain remains for a long time before leaving. 
By a Markov chain we mean a {\em discrete time} stochastic 
process satisfying the Markov property.
Heuristically, a set $S$ is metastable for a given Markov chain if the Markov chain 
reaches local equilibrium in $S$ much faster than 
it leaves $S$. We will define local equilibrium precisely below, using 
{\em quasistationary distributions} (QSDs).  The simulation of an exit event from
a metastable state using a naive integration technique can be very time consuming.

Metastable Markov chains arise in many contexts. The dynamics of physical systems are often modeled by 
memoryless stochastic processes, including Markov chains, with 
widespread applications in physics, chemistry, and
biology. In computational statistical physics (which is the main
application field we have in mind), 
such models are used to understand macroscopic properties of matter, 
starting from an atomistic description. 
The models can be discrete or continuous in time. 
The discrete in time case has particular importance:  
even when the underlying model is continuous in time, 
what is simulated in practice is a Markov chain obtained by 
time discretization. In the context of computational statistical physics, a widely used continuous time 
model is the Langevin dynamics~\cite{Lelievre:2010uu}, while a 
popular class of discrete time models are the Markov State 
Models~\cite{Prinz:2011id, Chodera:2007bs}. 
For details, see~\cite{Schutte:2013aa,Lelievre:2010uu}. 
For examples of discrete time models 
not obtained from an underlying continuous time dynamics, 
see~\cite{scoppola-94,bovier-2002}. In this article, we propose an
efficient algorithm for simulating metastable Markov chains over very
long time scales. Even though one of our motivations is to treat 
time discretized versions of continuous time models, 
we do not discuss errors in exit events due to time discretization; we refer for
example to~\cite{bouchard-geiss-gobet-2013} 
and references therein for an analysis of this error. 

In the physical applications above, metastability arises from the 
fact that the microscopic time scale (i.e., the physical time between
two steps of the Markov chain) is much smaller than the
macroscopic time scale of interest (i.e., the physical time to observe
a transition between metastable states). Both 
energetic and entropic barriers can contribute to metastability. 
Energetic barriers correspond to high energy saddle 
points between metastable states in the potential energy landscape, while entropic 
barriers are associated with narrow pathways  between metastable states; see Figure~\ref{f:metastable}.

\begin{figure}
  \begin{center}
    \subfigure[Energetic
    Barriers]{\includegraphics[width=8cm]{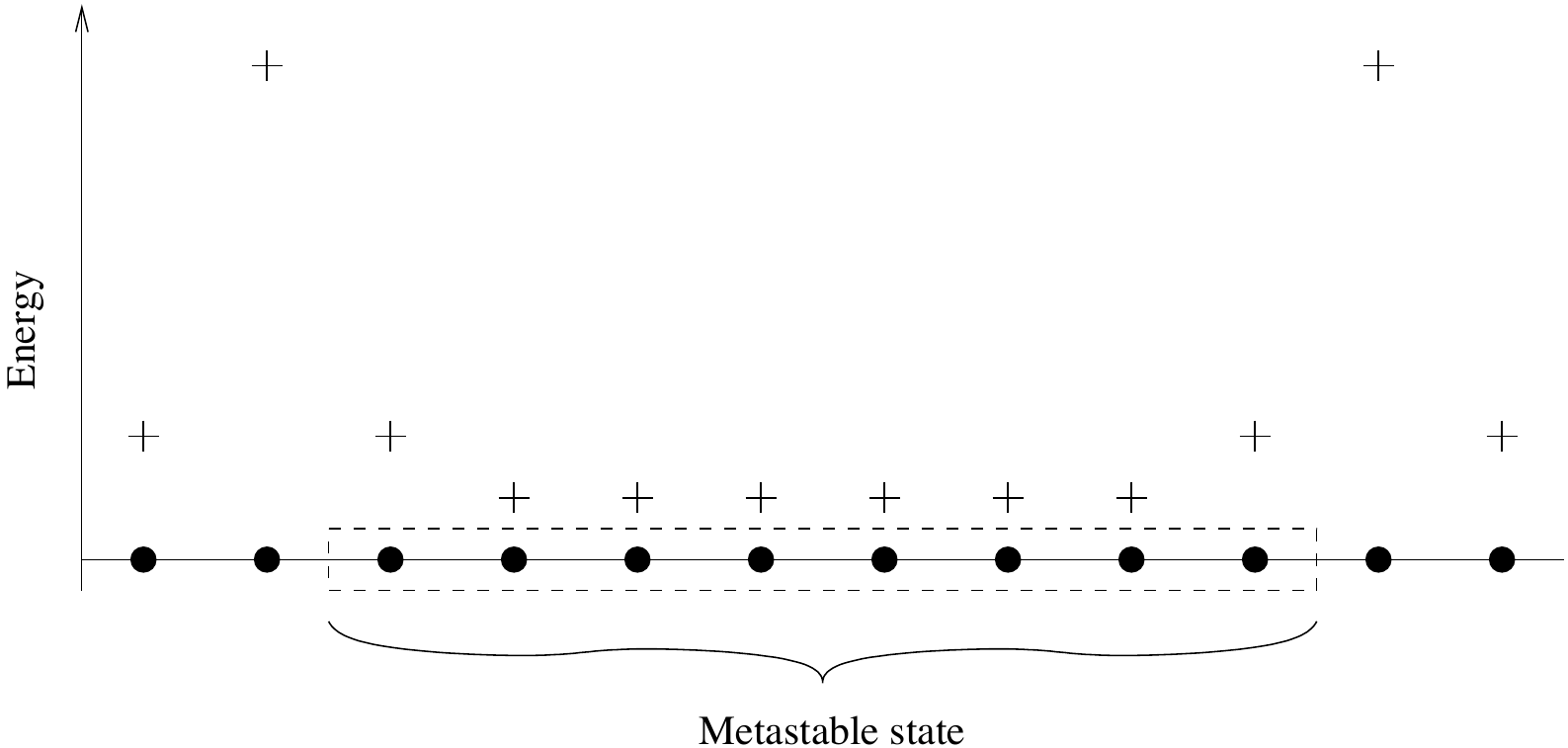}}
    \subfigure[Entropic
    Barriers]{\includegraphics[width=8cm]{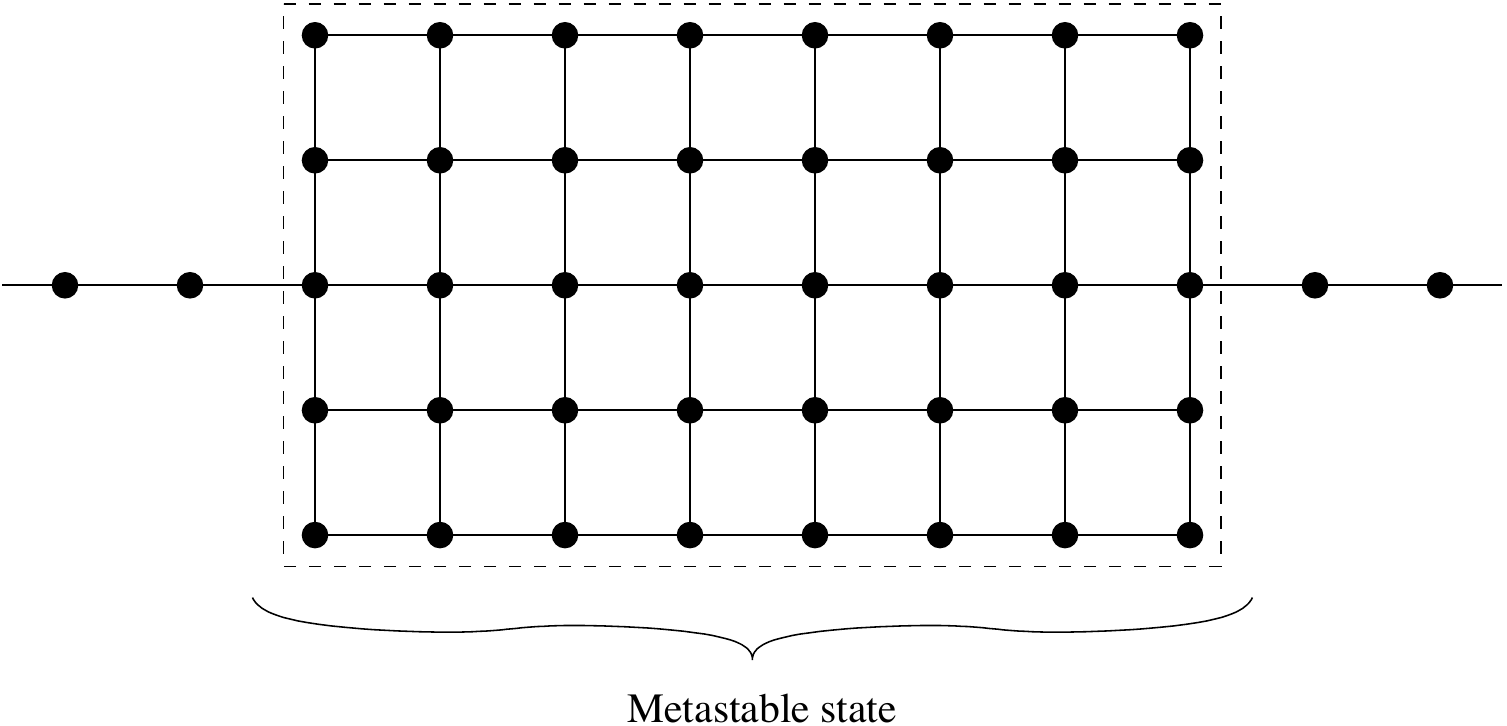}}
  \end{center}
  \caption{(a) Energetic and (b) entropic metastable states of a discrete
    configuration space Markov chain. The chain jumps from one point to another
    according to the following Metropolis dynamics. If $X_n = x$, a direction 
    (in (a), left or right; in (b), up, down, left, or right) is selected uniformly at random. If there is 
    a point $y$ which neighbors $x$ in this direction, then with probability 
    $\min\{1, e^{V(x)-V(y)}\}$ we take $X_{n+1} = y$; otherwise $X_{n+1} = x$. 
    Here, $V$ is a given potential energy function. On the left, each point has only
    two neighbors, and the potential energy is represented on the $y$-axis. On
    the right, each point has the same potential energy and between
    2 and 4 neighbors.}
  \label{f:metastable}
\end{figure}

Many algorithms exist for simulating metastable
stochastic processes over long time scales. One of the most
versatile such algorithms is the  {\em parallel replica
  dynamics} (ParRep) developed by A.F. Voter and co-workers~\cite{Voter,Voter:2002p12678}.
ParRep can be used with both energetic and entropic barriers,   
and it requires no assumptions about temperature, 
barrier heights, or reversibility. The algorithm was
developed to efficiently compute transitions between
metastable states of Langevin dynamics. 
For a mathematical analysis of ParRep in its original
continuous time setting, see~\cite{Gideon,Tony}. 
In this article, we present an algorithm which is an adaptation 
of ParRep to the discrete time setting. It applies to any 
Markov chain.

ParRep uses many replicas of the process, simulated in parallel
asynchronously, to rapidly find transition pathways out of
metastable states. The gain in efficiency over direct simulation comes
from distributing the computational effort across many processors,
parallelizing the problem in time. The cost is that the trajectory
becomes coarse-grained, evolving in the set of metastable states
instead of the original state space. The continuous 
time version of ParRep has been successfully 
used in a number of problems in materials science 
(see e.g.~\cite{Uberuaga:2007ho,Perez:2010dk,Perez:2013ge,Lu:2014hl,Joshi:2013wd,Komanduri:2000df,Baker:2012he}), 
allowing for atomistic resolution while also reaching extended time scales of
microseconds, $10^{-6}$ s. For reference, the microscopic time scale -- 
typically the period of vibration of bond lengths -- is about
$10^{-15}$ s.

In the continuous time case, consistency of the 
algorithm relies on the fact that first exit times from  metastable states are
exponentially distributed. Thus, if $N$ independent
identically distributed (i.i.d.) replicas have 
first exit times $T_i$, $i=1,\ldots, N$, then $N \min(T_1, \ldots,
T_N)$ has the same law as $T_1$. Now if 
$K = \arg\min(T_1, \ldots, T_N)$ is the first replica which leaves the
metastable state
amongst all the replicas, then the simulation clock is advanced
by $N T_K$, and this time agrees in law with the 
original process. In contrast, in the
discrete time case, the exit times from metastable states are
geometrically distributed.  Thus, if $\tau_i$
are now the geometrically distributed first exit times, 
then $N \min(\tau_1,\ldots, \tau_N)$ does not agree in law 
with $\tau_1$.  A different function of the $\tau_i$ must 
be found instead. This is our achievement with Algorithm~\ref{algorithm1} and 
Proposition~\ref{proposition1}. Our algorithm is 
based on the observation that $N[\min(\tau_1,
\ldots, \tau_N)-1] + \min[i \in \{1, \ldots ,N \} , \, \tau_i =  \min(\tau_1,
\ldots, \tau_N) ]$ agrees in law with $\tau_1$.

This article is organized as follows. 
In Section~\ref{QSD}, we formalize the notion of local equilibrium 
using QSDs. 
In Section~\ref{parrep} we present our discrete time 
ParRep algorithm, and in Section~\ref{parrepmath} we study its consistency. 
Examples and a discussion
follow in Section~\ref{EXAMPLE}.

\section{Quasistationary Distributions}\label{QSD}

Throughout this work, $(X_n)_{n\ge 0}$ will be a time homogeneous
Markov chain with values in a probability space $(\Omega,{\mathcal F},\PP)$. 
For a random variable $X$ and probability measure $\mu$, we
write $X\sim \mu$ to indicate $X$ is distributed according to
$\mu$. For random variables $X$ and $Y$, we write $X \sim Y$ when $Y$
is a random variable with the same law as $X$.  
 We write $\mathbb{P}^{\mu}(X_n \in A)$ and 
$\mathbb{E}^{\mu}[f(X_n)]$ to denote probabilities and expectations 
for the Markov chain $(X_n)_{n\ge 0}$ starting from the indicated
initial distribution: $X_0 \sim \mu$.  In the case that
$X_0 = x$, we write ${\mathbb P}^{x}(X_n \in A)$ and
$\mathbb{E}^{x}[f(X_n)]$ to denote probabilities and expectations 
for the Markov chain starting from $x$.

To formulate and apply ParRep, we first need to define the 
metastable subsets of $\Omega$, which 
we will simply call {\em states}. The states will be used to
coarse-grain the dynamics.
\begin{definition}
  Let ${\mathcal S}$ be the collection of states, which we
  assume are disjoint bounded measurable subsets of~$\Omega$. We write
  $S$ for a generic element of ${\mathcal S}$, and $\Pi:\Omega \to
  \Omega/{\mathcal S}$ for the quotient map identifying the states.
\end{definition}
As we will be concerned with when the chain exits  states,
we define the first exit time from $S$,
  \begin{equation*}
    \tau := \min\set{n\ge 0\,:\,X_n \notin S}.
  \end{equation*}
Much of the algorithm and analysis depends on the properties of the
QSD, which we now define.
\begin{definition}
  A probability measure $\nu$ with support in $S$ is a QSD if for all
  measurable $A \subset S$ and all $n \in {\mathbb N}$,
  \begin{equation}\label{QSD1}
    \nu(A) = {\mathbb P}^\nu\left(X_n \in A\,|\,\tau > n \right).
  \end{equation}
\end{definition}
Of course both $\tau$ and $\nu$ depend on $S$, but for ease of notation, we do not make 
this explicit. The QSD can be seen as a local equilibrium
reached by the Markov chain, conditioned on the event that it 
remains in the state. Indeed, it is easy to check that if $\nu$ is a
measure with support in $S$ such that,
\begin{equation}\label{QSD2}
  \text{for any measurable $A\subset S$ and any $\mu$ with support in
    $S$},\quad  \nu(A) = \lim_{n\to \infty} {\mathbb P}^\mu\left(X_n \in A\,|\,\tau > n\right),
\end{equation}
then $\nu$ is the QSD, which is then unique. In Section~\ref{QSDmath}, 
we give sufficient conditions for existence and uniqueness of
the QSD and for the convergence~\eqref{QSD2} to occur (see
Theorem~\ref{theorem1}). We refer the reader
to
\cite{Cattiaux:2009um,del2004feynman,Martinez:1994vn,Meleard:2012vl,Tony,collet13:_quasi_station_distr}
for additional properties of the QSD.

\section{The Discrete Time ParRep Algorithm}
\label{parrep}

Using the notation of the previous section, the aim of the ParRep
algorithm is to efficiently generate a trajectory $({\hat X}_n)_{n\ge
  0}$ evolving in $\Omega /{\mathcal S}$ which has, approximately, the
same law as the reference coarse-grained trajectory $(\Pi(X_n))_{n \ge 0}$. 
Two of the parameters in the algorithm -- $\tcorr = \tcorr(S)$ and $\tphase =
\tphase(S)$, called the {\em decorrelation} and {\em dephasing times} --
depend on the current state $S$, but for ease of notation we 
do not indicate this explicitly. See the remarks below Algorithm~\ref{algorithm1}.


\begin{algorithm}
  \label{algorithm1}
  Initialize a reference trajectory $X_0^{\refe} \in \Omega$. Let $N$
  be a fixed number of replicas and $\tpoll$ a fixed polling time at which
  the replicas resynchronize.  Set the simulation clock to zero:
  $\tsim = 0$. A coarse-grained trajectory $({\hat X}_n)_{n\ge 0}$
  evolving in $\Omega /{\mathcal S}$ is obtained by iterating the
  following: \vskip2pt
  \noindent
  \algorithmbox{{\bf Decorrelation Step:} Evolve the reference
    trajectory $(X_n^{\refe})_{n \ge 0}$ until it spends $\tcorr$
    consecutive time steps in some state $S \in {\mathcal S}$. 
    Then proceed to the dephasing
    step. Throughout this step, the simulation clock $\tsim$ is
    running and the coarse-grained trajectory is given by
    \begin{equation}\label{proj1}
      {\hat X}_{\tsim} = \Pi(X_{\tsim}^{\refe}).
    \end{equation}}
  \noindent
  \algorithmbox{ {\bf Dephasing Step:} The simulation clock $\tsim$ is
    now stopped and the reference and coarse-grained trajectories do
    not evolve.  Evolve $N$ independent replicas $\set{X_n^j}_{j=1}^N$
    starting at some initial distribution with support in $S$, such
    that whenever a replica leaves $S$ it is restarted at the initial
    distribution. When a replica spends $\tphase$ consecutive time
    steps in $S$, stop it and store its end position. 
    When all the replicas have stopped, reset
    each replica's clock to $n=0$ and proceed to the parallel step.}
  \noindent
  \algorithmbox{{\bf Parallel Step:} Set $M = 1$ and iterate the
    following:
    \begin{enumerate}
    \item Evolve all $N$ replicas $\set{X_n^j}_{j=1}^N$ from time $n =
      (M-1)\tpoll$ to time $n = M\tpoll$. The simulation clock $\tsim$
      is not advanced in this step.

    \item If none of the replicas leaves $S$ during this time, update
      $M = M+1$ and return to 1, above.

      Otherwise, let $K$ be the smallest number $j$ such that $X_n^j$
      leaves $S$ during this time, let $\tau^K$ be the corresponding
      (first) exit time, and set
      \begin{equation}
        \label{e:exit_update}
        \xacc = X_{\tau^K}^K,\quad \tacc = (N-1)(M-1)\tpoll +
        (K-1)\tpoll + \tau^K.
      \end{equation}
      Update the coarse-grained trajectory by
      \begin{equation}\label{proj2}
        {\hat X}_n = \Pi(S) \quad \hbox{for}\quad n \in [\tsim,
        \tsim+ \tacc-1],
      \end{equation}
      and the simulation clock by $\tsim = \tsim + \tacc$.  Set
      $X_{\tsim}^{\refe} = \xacc$, and return to the decorrelation
      step.
    \end{enumerate}
  }
\end{algorithm}
The idea of the parallel step is to compute the exit time from $S$ as
the sum of the times spent by the replicas up to the first exit
observed among the replicas. More precisely, if we imagine the
replicas being ordered by their indices ($1$ through $N$), this sum is
over all $N$ replicas up to the last polling time, and then over the
first $K$ replicas in the last interval between polling times, $K$
being the smallest index of the replicas which are the first to exit.
Notice that $M$ and $\tau^K$ are such that $\tau^K \in  [(M-1) \tpoll+1,
M \tpoll]$. See Figure~\ref{fig0} for a schematic of the Parallel Step. 
We comment that the formula for updating the simulation time in 
the parallel step of the original ParRep algorithm is simply $\tacc=N \tau^K$.

A few remarks
are in order (see \cite{Gideon,Tony} for additional comments on
the continuous time algorithm):
\begin{description}

\item[The Decorrelation Step.] In this step, the reference trajectory
  is allowed to evolve until it spends a sufficiently long time in a
  single state. At the termination of the decorrelation step, the
  distribution of the reference trajectory should be, according
  to~\eqref{QSD2}, close to that of the QSD (see 
  Theorem~\ref{theorem1} in Section~\ref{QSDmath}).

  The evolution of the reference trajectory is {\em exact} in the
  decorrelation step, and so the coarse-grained trajectory is also
  exact in the decorrelation step.

\item[The Dephasing Step.] The purpose of the dephasing step is to
  generate $N$ i.i.d. samples from the QSD.  While we have described a simple
  rejection sampling algorithm, there is another technique~\cite{Binder:aa} based on a
  branching and interacting particle process sometimes called
  the Fleming-Viot particle process~\cite{ferrari07:_quasi_flemin_viot}.  See
  \cite{Bieniek:2011jf,Bieniek:2012jg,del2004feynman,Grigorescu:2004bs,Meleard:2012vl}
  for studies of this process, and~\cite{Binder:aa} for a discussion of 
  how the Fleming-Viot particle process may be used in ParRep.

  In our rejection sampling we have flexibility on where to initialize
  the replicas.  One could use the position of the reference chain at
  the end of the decorrelation step, or any other point in $S$.

\item[The Decorrelation and Dephasing Times.] $\tcorr$ and $\tphase$
  must be sufficiently large so that the distributions of both the
  reference process and the replicas are as close as possible to the
  QSD, without exhausting computational resources.  $\tphase$ and
  $\tcorr$ play similar roles, and they both depend on the initial
  distribution of the processes in $S$.


  Choosing good values of these parameters is nontrivial, as they
  determine the accuracy of the algorithm. In \cite{Binder:aa}, 
  the Fleming-Viot particle process together with convergence 
  diagnostics are used to determine these parameters on the fly 
  in each state. They can also be postulated from some {\it a priori} 
  knowledge (e.g., barrier height between states), if available.

\item[The Polling Time.]

  The purpose of the polling time $\tpoll$ is to permit for periods of
  asynchronous computation of the replicas in a distributed computing
  environment.  For the accelerated time to be correct, it is
  essential that all replicas have run for at least as long as replica
  $K$. Ensuring this requires resynchronization, which occurs 
  at the polling time. 
  
  If communication amongst the replicas is cheap or there is little
  loss of synchronization per time step, one can take $\tpoll
  =1$. In this case, $M=\min\{n\,:\, \exists j \in \{1, \ldots, N\} \,s.t.\,
  X_n^j \not \in S\}$ is the first exit time observed among the $N$
  replicas, $K=\min\{j\,:\, X_M^j \not \in S\}$ (so $M=\tau^K$)
  and $\tacc=N(\tau^K-1)  +K$.


\item[Efficiency of the Algorithm.] For the algorithm to be
  efficient, the states must be truly metastable: within
  each state, the typical time to reach the QSD ($\tcorr$ and
  $\tphase$) should be small relative to the typical exit time. 
  If most states are not metastable, then the exit times
  will be typically smaller than the decorrelation times,  
  and the algorithm will rarely proceed to the dephasing 
  and parallel steps.

  The algorithm is consistent even if some or all the states are not 
   metastable. Indeed, the states can be {\em any} collection of disjoint sets. However, if
  these sets are not reasonably defined, it will be difficult to
  obtain any gain in efficiency with ParRep. Defining the 
  states requires some {\em a priori} knowledge about the system.

\end{description}


\begin{figure}
  \begin{center}
    \includegraphics[width=10cm]{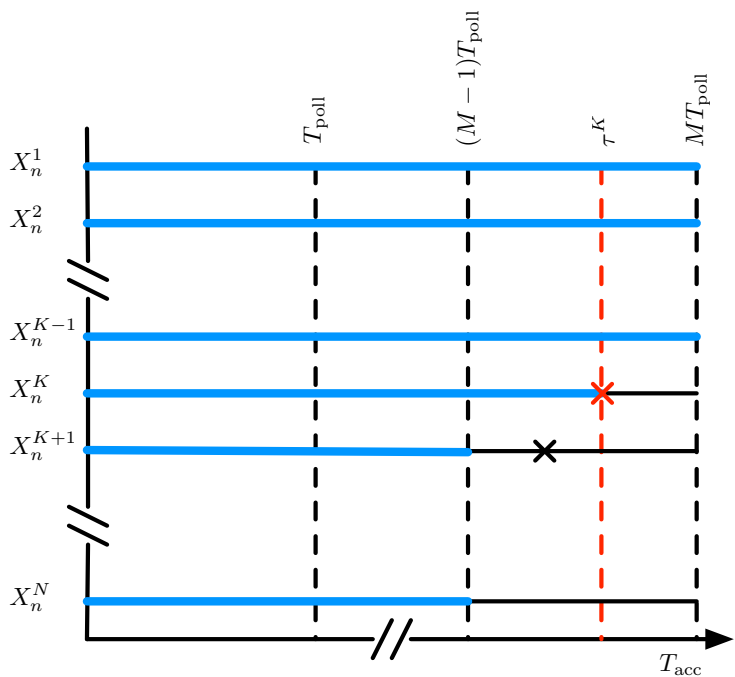}
  \end{center}
  \caption{A schematic of the parallel step.  The horizontal lines
    represent the trajectories of replicas $1,\ldots N$ while the
    crosses correspond to exit events.  Index $K$ is as defined as in
    Algorithm~\ref{algorithm1}.  Here, $M$ cycles internal to the
    parallel step have taken place.  The thicker lines correspond to
    the portions of the chains contributing to~$\tacc$.  }
  \label{fig0}
\end{figure}

\section{Mathematical Analysis of Discrete Time ParRep}
\label{parrepmath}

The main result of this section, Proposition~\ref{proposition1}, shows that
the coarse-grained trajectory simulated in ParRep is {\em exact} if
the QSD has been exactly reached in the decorrelation and dephasing
steps; see Equation~\eqref{eq:theorem3} below.

\subsection{Properties of Quasistationary Distributions}
\label{QSDmath}
Before examining ParRep, we give a condition for existence 
and uniqueness of the QSD. We also state important properties 
of the exit law starting from the QSD. Many of these results 
can be found in~\cite{del2004feynman,collet13:_quasi_station_distr}. 
We assume the following, which is sufficient
 to ensure existence and uniqueness 
of the QSD.
\begin{assumption}
  \label{assumption1}
  Let $S \in {\mathcal S}$ be any state.
  \begin{enumerate}
  \item For any $x\in S$, ${\mathbb P}^x(X_1 \in S) > 0.$
  \item There exists $m\geq 1$ and $\delta \in (0,1)$, such that for
    all $x,y\in S$ and all bounded non-negative measurable functions
    $f:S\to {\mathbb R}$, $ {\mathbb E}^x\left[f(X_m)\,1_{\{\tau >
        m\}}\right] \ge \delta \, {\mathbb E}^y\left[f(X_m)1_{\{\tau >
        m\}}\right].$
  \end{enumerate}
\end{assumption}
With this condition, the following holds (see \cite[Theorem 1]{delmoral}):
\begin{theorem}\label{theorem1}
  Under Assumption~\ref{assumption1}, there exists a unique QSD $\nu$
  in $S$.  Furthermore, for any probability measure $\mu$ with support
  in $S$ and any bounded measurable function $f:S \to {\mathbb R}$,
  \begin{equation}
    \label{e:qsd_convergence}
    \left|{\mathbb E}^{\mu}\left[f(X_n)\,|\,\tau > n\right] 
      -\int_S f(x)\,\nu(dx) \right|
    \le \|f\|_{\infty} \,4 \, \delta^{-1}(1-\delta^2)^{\lfloor n/m \rfloor}.
  \end{equation}
\end{theorem}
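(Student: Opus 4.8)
The plan is to recast the conditioned dynamics as a normalized sub-Markovian semigroup and to prove a Dobrushin-type contraction for it. I would introduce the sub-Markovian kernel $Q$ on $S$ by $Qf(x)=\E^x[f(X_1)\,1_{\{\tau>1\}}]$, so that by the Markov property $Q^nf(x)=\E^x[f(X_n)\,1_{\{\tau>n\}}]$ and
\[
\E^{\mu}[f(X_n)\,|\,\tau>n]=\frac{\mu Q^n f}{\mu Q^n 1},
\]
a QSD being exactly a probability measure $\nu$ on $S$ with $\nu Q/(\nu Q\,1)=\nu$. Part~1 of Assumption~\ref{assumption1}, iterated, first guarantees $Q^n1(x)=\PP^x(\tau>n)>0$ for every $x$ and $n$, so that the backward quotients $v_n(x):=Q^nf(x)/Q^n1(x)=\E^x[f(X_n)\,|\,\tau>n]$ are well defined; these are the objects I would track.

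The crux is a contraction of the oscillation $\mathrm{osc}(v_n):=\sup_S v_n-\inf_S v_n$. Using $Q^{n+m}=Q^mQ^n$ and writing $Q^nf=v_n\,Q^n1$, one obtains the averaging identity
\[
v_{n+m}(x)=\int_S v_n(z)\,\pi_x(dz),\qquad \pi_x(dz):=\frac{Q^n1(z)\,Q^m(x,dz)}{Q^{n+m}1(x)},
\]
where each $\pi_x$ is a genuine probability measure. The role of Part~2 of Assumption~\ref{assumption1} is that, read with $f=1_A$, it says $Q^m(x,\cdot)\ge\delta\,Q^m(y,\cdot)$ as measures for all $x,y\in S$; multiplying by the fixed nonnegative weight $Q^n1$ and normalizing, an elementary overlap (minorization) bound then yields $\sup_{x,y}\norm{\pi_x-\pi_y}_{\mathrm{TV}}\le 1-\delta$. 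Since $v_{n+m}(x)-v_{n+m}(y)=\int_S v_n\,d(\pi_x-\pi_y)$ is controlled by $\mathrm{osc}(v_n)\,\norm{\pi_x-\pi_y}_{\mathrm{TV}}$, this gives $\mathrm{osc}(v_{n+m})\le(1-\delta)\,\mathrm{osc}(v_n)$. This normalization-induced nonlinearity -- the division by $Q^n1$ -- is the only real obstacle, and the averaging representation above is precisely what reduces it to a Dobrushin contraction for the \emph{linear} Markov kernels $\pi_x$.

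Writing $n=r+km$ with $0\le r<m$ and $k=\lfloor n/m\rfloor$, and iterating the contraction $k$ times starting from $v_r$, I would conclude $\mathrm{osc}(v_n)\le 2\norm{f}_\infty(1-\delta)^{\lfloor n/m\rfloor}$. The same averaging identity with $m$ replaced by $1$ shows $\inf_S v_n$ is nondecreasing and $\sup_S v_n$ nonincreasing, so the value $\E^{\mu}[f(X_n)\,|\,\tau>n]$, being a $Q^n1\,d\mu$-weighted average of $v_n$, is trapped in nested intervals of length $\mathrm{osc}(v_n)\to0$; hence it converges to a limit $\nu(f)$ independent of $\mu$, with $\abs{\E^{\mu}[f(X_n)\,|\,\tau>n]-\nu(f)}\le\mathrm{osc}(v_n)$. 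Applying this with $f=1_A$ and taking the supremum over $A$ shows the convergence is uniform in $A$, i.e.\ in total variation, so the limit $\nu$ is a genuine probability measure on $S$; by construction it satisfies the characterization~\eqref{QSD2}, whence $\nu$ is the QSD and is unique, proving the first assertion. Finally, since $1-\delta\le 1-\delta^2$ and $2\le 4\delta^{-1}$ for $\delta\in(0,1)$, the sharper estimate just obtained implies the stated bound~\eqref{e:qsd_convergence}.
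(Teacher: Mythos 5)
Your proposal is correct, but it takes a genuinely different route from the paper, for the simple reason that the paper contains no proof of Theorem~\ref{theorem1} at all: it is quoted from \cite[Theorem~1]{delmoral}, whose constant $4\,\delta^{-1}(1-\delta^2)^{\lfloor n/m\rfloor}$ is inherited verbatim. What you supply is a self-contained reconstruction of the stability mechanism behind that citation --- writing the conditioned law as the normalized semigroup $\mu Q^n f/\mu Q^n 1$ and reducing the normalization nonlinearity, via the reweighted kernels $\pi_x$, to a linear Dobrushin contraction --- and it in fact yields the sharper estimate $2\norm{f}_\infty(1-\delta)^{\lfloor n/m\rfloor}$, which implies \eqref{e:qsd_convergence} since $1-\delta\le 1-\delta^2$ and $2\le 4\delta^{-1}$ for $\delta\in(0,1)$. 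Two steps that you assert without detail deserve comment, though both check out. First, the bound $\norm{\pi_x-\pi_y}_{\rm TV}\le 1-\delta$ is \emph{not} the naive symmetric overlap bound: with $\tilde\mu_x(dz):=Q^n1(z)\,Q^m(x,dz)$, the two-sided minorization $\tilde\mu_x\ge\delta\tilde\mu_y$ and $\tilde\mu_y\ge\delta\tilde\mu_x$ normalizes symmetrically only to $\pi_x\ge\delta^2\pi_y$, i.e.\ to $1-\delta^2$; to get $1-\delta$ one must break the symmetry, say by taking $\tilde\mu_x(S)\le\tilde\mu_y(S)$, whence $\pi_x=\tilde\mu_x/\tilde\mu_x(S)\ge\delta\,\tilde\mu_y/\tilde\mu_x(S)\ge\delta\,\tilde\mu_y/\tilde\mu_y(S)=\delta\pi_y$, giving overlap at least $\delta$. (Even the cruder $1-\delta^2$ rate with your prefactor $2$ would still dominate the stated bound, so the argument is robust either way.) Second, your existence and uniqueness step leans on the implication ``\eqref{QSD2} implies $\nu$ is the unique QSD,'' which the paper asserts as easy but does not prove; to close it within your framework, note that the uniform convergence you established gives $\nu(Qf)=\lim_n \mu Q^{n+1}f/\mu Q^n1 =\lim_n\bigl(\mu Q^{n+1}f/\mu Q^{n+1}1\bigr)\bigl(\mu Q^n(Q1)/\mu Q^n1\bigr)=\nu(f)\,\nu(Q1)$, so $\nu Q^n=\nu(Q1)^n\,\nu$ and $\nu$ is indeed a QSD, while any QSD $\nu'$ satisfies $\nu'(A)={\mathbb P}^{\nu'}(X_n\in A\,|\,\tau>n)\to\nu(A)$, forcing $\nu'=\nu$. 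In short: the paper buys the result off the shelf; your argument makes the note self-contained, exposes exactly where parts~1 and~2 of Assumption~\ref{assumption1} enter (positivity of $Q^n1$, and the minorization), and improves the constant.
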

Theorem~\ref{theorem1} shows that the law of $(X_n)_{n\ge 0}$, conditioned on
not exiting $S$, converges in total variation norm to the QSD $\nu$ as $n\to
\infty$.  Thus, at the end of the decorrelation and dephasing steps, if 
$\tcorr$ and $\tphase$ are sufficiently large,  
then the law of the reference process and replicas will be close to that of the
QSD. Notice that Theorem~\ref{theorem1} provides 
an explicit error bound in total variation norm.


Next we state properties of the exit law 
starting from the QSD which are essential to our analysis. While these results are well-known
(see, for instance,
\cite{del2004feynman,collet13:_quasi_station_distr}), we give brief
proofs for completeness.
\begin{theorem}\label{theorem2}
  If $X_0 \sim \nu$, with $\nu$ the QSD in $S$, then $\tau$ and $X_\tau$
  are independent, and $\tau$ is geometrically distributed with
  parameter $p = {\mathbb P}^{\nu} (X_1\notin S)$.
\end{theorem}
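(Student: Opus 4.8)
The plan is to prove both claims — the independence of $\tau$ and $X_\tau$, and the geometric distribution of $\tau$ — by exploiting the defining invariance property of the QSD, namely that conditioning on survival reproduces $\nu$. The key structural fact I want to establish first is the memorylessness of the exit time, which in the discrete setting means $\mathbb{P}^\nu(\tau > n) = (1-p)^n$ where $p = \mathbb{P}^\nu(X_1 \notin S)$. From this the geometric law follows immediately.

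**First I would** compute $\mathbb{P}^\nu(\tau > n)$ by induction on $n$, using the tower property over the last step. Write $\mathbb{P}^\nu(\tau > n) = \mathbb{P}^\nu(\tau > n-1)\,\mathbb{P}^\nu(\tau > n \mid \tau > n-1)$. The second factor, by the Markov property, equals the probability that starting from the law of $X_{n-1}$ conditioned on $\{\tau > n-1\}$, the chain stays in $S$ one more step. But the QSD definition~\eqref{QSD1} says precisely that this conditional law is again $\nu$, so $\mathbb{P}^\nu(\tau > n \mid \tau > n-1) = \mathbb{P}^\nu(X_1 \in S) = 1-p$. Iterating gives $\mathbb{P}^\nu(\tau > n) = (1-p)^n$, whence $\mathbb{P}^\nu(\tau = n) = (1-p)^{n-1}p$ for $n \ge 1$, i.e. $\tau$ is geometric with parameter $p$.

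**For independence,** I would show that for every measurable $A \subset \Omega \setminus S$ and every $n \ge 1$ the joint probability factorizes: $\mathbb{P}^\nu(\tau = n,\, X_\tau \in A) = \mathbb{P}^\nu(\tau = n)\,\mathbb{P}^\nu(X_\tau \in A)$. The clean route is again to condition on survival up to time $n-1$. On the event $\{\tau = n,\, X_n \in A\}$ I decompose via the Markov property at time $n-1$: the chain must survive to time $n-1$, and then from its (conditioned) position make a single step landing in $A$. Since the conditional law of $X_{n-1}$ given $\{\tau > n-1\}$ is exactly $\nu$ by~\eqref{QSD1}, the one-step transition probability into $A$ is $\mathbb{P}^\nu(X_1 \in A)$, a quantity independent of $n$. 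Thus $\mathbb{P}^\nu(\tau = n,\, X_\tau \in A) = (1-p)^{n-1}\,\mathbb{P}^\nu(X_1 \in A)$, and summing over $n$ identifies $\mathbb{P}^\nu(X_\tau \in A) = \mathbb{P}^\nu(X_1 \in A \mid X_1 \notin S)$, which factors the joint law as desired.

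**The main obstacle** — though more a matter of care than of difficulty — is handling the conditioning cleanly at the level of measures rather than just probabilities of sets, so that~\eqref{QSD1} can be applied to the conditional law of $X_{n-1}$ restricted to $S$. The subtlety is that the event $\{\tau = n\}$ mixes the survival event $\{\tau > n-1\}$ (which keeps the process inside $S$, where $\nu$ lives) with the exit event $\{X_n \notin S\}$; the argument works precisely because the QSD property lets me replace the conditioned in-state distribution at time $n-1$ by $\nu$ itself before taking the final step. Once that replacement is justified for each $n$, both independence and the geometric law drop out of the same one-step computation.
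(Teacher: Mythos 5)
Your proposal is correct and takes essentially the same route as the paper's proof: both arguments condition on survival up to time $n-1$, use the Markov property for the final step, and invoke the QSD invariance \eqref{QSD1} to replace the conditional law of $X_{n-1}$ by $\nu$, yielding the one-step factor $1-p$ for the geometric law and an $n$-independent exit distribution. The only cosmetic difference is that you factorize the joint probability $\mathbb{P}^\nu(\tau = n,\, X_\tau \in A)$ directly, whereas the paper shows ${\mathbb E}^\nu\left[f(X_\tau)\,|\,\tau = n\right]$ is constant in $n$ via the transition kernel; the two formulations are equivalent.
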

\begin{proof}
  Let $k(x,dy)$ denote the transition kernel of $(X_n)_{n\ge 0}$. We compute
  \begin{align*} {\mathbb E}^\nu\left[f(X_{\tau})\,|\,\tau = n\right]
    = \frac{{\mathbb E}^\nu\left[f(X_n)\,1_{\{\tau = n\}}\right]}
    {{\mathbb E}^\nu\left[1_{\{\tau = n\}}\right]} &= \frac{{\mathbb
        E}^\nu\left[1_{\{\tau > n-1\}}\int_{\Omega\setminus S}
        f(y)k(X_{n-1},dy)\right]}
    {{\mathbb E}^\nu\left[1_{\{\tau > n-1\}}\int_{\Omega\setminus S} k(X_{n-1},dy)\right]}\\
    &= \frac{{\mathbb E}^\nu\left[\int_{\Omega\setminus S}
        f(y)k(X_{n-1},dy)\,\big|\,\tau > n-1\right]}
    {{\mathbb E}^\nu\left[\int_{\Omega\setminus S} k(X_{n-1},dy)\,\big|\, \tau > n-1\right]} \\
    &= \frac{\int_S \left(\int_{\Omega\setminus S}
        f(y)k(x,dy)\right)\nu(dx)} {\int_S \left(\int_{\Omega\setminus
          S} k(x,dy)\right)\nu(dx)} ={\mathbb
      E}^\nu\left[f(X_{\tau})\,|\,\tau = 1\right].
  \end{align*}
  The second to last equality is an application of \eqref{QSD1}.  As
  ${\mathbb E}^\nu\left[f(X_{\tau})\,|\,\tau = 1\right]$ is
  independent of $n$, this establishes independence of $\tau$ and $X_\tau$.

  Concerning the distribution of $\tau$, we first calculate
  \begin{equation*} {\mathbb P}^\nu(\tau > n) ={\mathbb
      P}^\nu\left(\tau > n\big|\tau > n-1\right) {\mathbb P}^\nu(\tau
    > n-1)
  \end{equation*}
  and then again use \eqref{QSD1}:
  \begin{align*} {\mathbb P}^\nu\left(\tau > n\big|\tau > n-1\right)=
    \frac{{\mathbb E}^\nu\left[1_{\{\tau > n\}}\right]} { {\mathbb
        P}^\nu(\tau > n-1)} &= \frac{{\mathbb E}^\nu\left[1_{\{\tau >
          n-1\}}\int_S k(X_{n-1},dy)\right]} { {\mathbb
        P}^\nu(\tau > n-1)}\\
    &= {\mathbb E}^\nu\left[\int_S k(X_{n-1},dy)\,\big|\,\tau > n-1\right]\\
    &= \int_S \left(\int_S k(x,dy)\right)\,\nu(dx) = {\mathbb
      P}^{\nu}(X_1 \in S).
  \end{align*}
  Thus, ${\mathbb P}(\tau^\nu > n) = {\mathbb P}(X_1^\nu \in
  S){\mathbb P}(\tau^\nu > n-1)$ and by induction, $ {\mathbb
    P}^\nu(\tau > n) = \left[{\mathbb P}^\nu(X_1 \in S)\right]^n =
  (1-p)^n$.
\end{proof}

\subsection{Analysis of the exit event}\label{exitevent}

We can now state and prove our main result. 
We make the following idealizing assumption, which
allows us to focus on the the parallel step in
Algorithm~\ref{algorithm1}, neglecting the errors 
due to imperfect sampling of the QSD. 
\begin{idealization}
\label{assumption2}
  Assume that:
  \begin{itemize}
  \item[(A1)] After spending $\tcorr$ consecutive time steps in $S$,
    the process $(X_n)_{n\ge 0}$ is {\em exactly} distributed
    according to the QSD~$\nu$ in $S$. In particular, at the end of
    the decorrelation step, $X_{\tsim}^{\refe} \sim \nu$.
  \item[(A2)] At the end of the dephasing step, all $N$ replicas are
    i.i.d. with law {\em exactly} given by $\nu$.  \end{itemize}
\end{idealization}
Idealization~\ref{assumption2} 
is introduced in view of Theorem~\ref{theorem1}, 
which ensures that the QSD sampling 
error from the dephasing and decorrelation steps 
vanishes as $\tcorr$ and $\tphase$ become large. 
Of course, for finite $\tcorr$ and $\tphase$, 
there is a nonzero error; 
this error will indeed propagate in time, 
but it can be controlled in terms of these two
parameters. For a detailed analysis in the continuous time case, 
see~\cite{Gideon,Tony}. Though the arguments
in~\cite{Gideon,Tony} could be adapted to our time discrete
setting, we do not go in this direction; instead we 
focus on showing consistency of the parallel step.




  Under Idealization~\ref{assumption2}, we show that ParRep is {\em exact}. 
  That is, the
  trajectory generated by ParRep has the same probability law as the
  true coarse-grained chain:
  \begin{equation}~\label{eq:theorem3}
    ({\hat X}_n)_{n\ge 0} \sim (\Pi(X_n))_{n \ge 0}.
  \end{equation}
The evolution of the ParRep coarse-grained trajectory is {exact} in the
decorrelation step. Together with Idealization~\ref{assumption2}, 
this means~\eqref{eq:theorem3} holds if the 
parallel step is consistent (i.e. exact, if all 
replicas start at i.i.d. samples of the QSD). 
This is the content of the following proposition.
\begin{proposition}\label{proposition1}
Assume that the $N$ replicas at the beginning of the parallel step are
i.i.d.  with law {\em exactly} given by the QSD $\nu$ in $S$ (this is
Idealization \ref{assumption2}-(A2)). Then the parallel step of
  Algorithm~\ref{algorithm1} is exact:
\begin{equation*}
 (\xacc, \tacc) \sim (X_\tau, \tau),
\end{equation*}
where $(\xacc,\tacc)$ is defined as in Algorithm~\ref{algorithm1}, 
while $(X_\tau, \tau)$ is defined for $(X_n)_{n\ge 0}$ 
starting at $X_0 \sim \nu$. 
\end{proposition}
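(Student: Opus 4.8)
The plan is to exploit Theorem~\ref{theorem2}, which describes a single replica started from $\nu$: its first exit time $\tau^j$ is geometric with parameter $p$ and independent of its exit position $X^j_{\tau^j}$, the latter having the law of $X_\tau$. Since $(X_\tau,\tau)$ thus has independent coordinates, it suffices to establish three things for the output pair: that $\tacc$ is geometric with parameter $p$, that $\xacc \sim X_\tau$, and that $\xacc$ and $\tacc$ are independent.

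I would dispose of the position and the independence first. Writing $\tau^j$ and $X^j_{\tau^j}$ for the exit time and exit position of replica $j$, the i.i.d.\ assumption together with Theorem~\ref{theorem2} makes the pairs $(\tau^j, X^j_{\tau^j})$, $j=1,\dots,N$, i.i.d.\ with independent coordinates. A one-line factorization of the joint law then shows that the full vector of exit times $(\tau^1,\dots,\tau^N)$ is independent of the full vector of exit positions $(X^1_{\tau^1},\dots,X^N_{\tau^N})$, the latter being i.i.d.\ copies of $X_\tau$. Crucially, $M$ and $K$ -- and hence $\tacc$ -- are deterministic functions of the exit times alone. Conditioning on $(\tau^1,\dots,\tau^N)$ therefore fixes the index $K$ while leaving $\xacc = X^K_{\tau^K}$ distributed as $X_\tau$; this simultaneously gives $\xacc \sim X_\tau$ and the independence of $\xacc$ from $\tacc$.

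The substance is the law of $\tacc$. I would reparametrize the exit data by $a := M-1 \ge 0$, the number of completed polling cycles with no exit, the winning index $K \in \{1,\dots,N\}$, and the residual $r := \tau^K - (M-1)\tpoll \in \{1,\dots,\tpoll\}$, so that $\tacc = Na\tpoll + (K-1)\tpoll + r$. The point is that $(a,K,r) \mapsto Na\tpoll + (K-1)\tpoll + r$ is a bijection of $\{0,1,2,\dots\}\times\{1,\dots,N\}\times\{1,\dots,\tpoll\}$ onto $\{1,2,3,\dots\}$, so it suffices to evaluate $\PP(\tacc = n)$ for the unique triple $(a,k,s)$ representing a given $n \ge 1$ and to check that it equals $(1-p)^{n-1}p$.

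The computation itself is where I expect the bookkeeping to be the main obstacle. The event $\{M = a+1,\, K = k,\, \tau^K = a\tpoll + s\}$ must be rewritten as independent conditions on the individual replicas: replicas $j<k$ must \emph{not} exit before the end of cycle $M$, i.e.\ $\tau^j > (a+1)\tpoll$; replica $k$ exits exactly at $\tau^k = a\tpoll + s$; and replicas $j>k$ need only survive the first $a$ cycles, $\tau^j > a\tpoll$, being otherwise unconstrained, since $k$ is already the smallest exiting index and later replicas may or may not exit during cycle $M$. This asymmetry is the subtle point: the late replicas contribute a factor $(1-p)^{a\tpoll}$ rather than a full-cycle survival factor. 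Plugging $\PP(\tau^j > t) = (1-p)^t$ and $\PP(\tau^k = a\tpoll + s) = (1-p)^{a\tpoll + s - 1}p$ into the product and collecting exponents, the total power of $(1-p)$ collapses to $Na\tpoll + (k-1)\tpoll + s - 1 = n-1$, yielding $\PP(\tacc = n) = (1-p)^{n-1}p$ as required.
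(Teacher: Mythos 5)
Your proposal is correct and essentially reproduces the paper's proof: the three-part reduction via Theorem~\ref{theorem2}, and the computation of the law of $\tacc$ via the event decomposition $\{\tau^j>(a+1)\tpoll \text{ for } j<k,\ \tau^k=a\tpoll+s,\ \tau^j>a\tpoll \text{ for } j>k\}$ with the collapsing exponent, is exactly the content of the paper's Lemma~\ref{lemma1} (with $m=a+1$, $t=s$), including the asymmetric survival constraints on the late replicas that you correctly flag. The only cosmetic difference is that you obtain $\xacc\sim X_\tau$ and the independence of $(\xacc,\tacc)$ by conditioning on the full exit-time vector $(\tau^1,\dots,\tau^N)$, of which $\tacc$ is a deterministic function, whereas the paper conditions on $\sigma(K,\tau^K)$; the two are interchangeable here.
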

To prove Proposition~\ref{proposition1}, we need the following lemma:
\begin{lemma}\label{lemma1}
  Let $\tau^{1}, \tau^{2},\ldots, \tau^{N}$ be i.i.d. geometric random
  variables with parameter $p$: for $t \in {\mathbb N}\cup \{0\}$,
  \[ {\mathbb P}(\tau^{j} > t) = (1-p)^{t}.
  \]
  Define
  \begin{align*}
    M &= \min\{m \ge 1\,:\, \exists\, j \in
    \{1,\ldots,N\}\,\,\,s.t.\,\,\,\tau^{j} \le m \tpoll\},\\
    K &= \min\{j \in \{1,\ldots,N\}\,:\, \tau^{j} \le M\tpoll\},\\
    \xi&= (N-1)(M-1)\tpoll + (K-1)\tpoll + \tau^{K}.
  \end{align*}
  Then $\xi$ has the same law as $\tau^{1}$.
\end{lemma}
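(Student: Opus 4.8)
The plan is to compute the law of $\xi$ directly, showing that its probability mass function coincides with that of a geometric random variable with parameter $p$. The starting observation is that on the event $\{M=m\}$ no replica has exited by the end of block $m-1$, so $\tau^{K} > (M-1)\tpoll$; writing the residual $r := \tau^{K} - (M-1)\tpoll$ we have $r \in \{1,\ldots,\tpoll\}$ and
\[
\xi = \bigl(N(M-1) + (K-1)\bigr)\tpoll + r .
\]
First I would fix a triple $(m,k,s)$ with $m \ge 1$, $k \in \{1,\ldots,N\}$, $s \in \{1,\ldots,\tpoll\}$ and describe the event $\{M=m,\,K=k,\,r=s\}$ in terms of the individual $\tau^{j}$: for $j<k$ one needs $\tau^{j} > m\tpoll$ (these replicas have not exited by block $m$, since $k$ is the first index to do so); for $j=k$ one needs $\tau^{k} = (m-1)\tpoll + s$; and for $j>k$ one needs $\tau^{j} > (m-1)\tpoll$ (this is what $M=m$ imposes on the remaining replicas).

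By independence and the formulas $\PP(\tau^{j} > t) = (1-p)^{t}$ and $\PP(\tau^{j} = \ell) = (1-p)^{\ell-1}p$, the joint probability factorizes over these three groups of replicas, and collecting the exponents of $(1-p)$ yields
\[
\PP\bigl(M=m,\,K=k,\,r=s\bigr) = (1-p)^{(N(m-1)+(k-1))\tpoll + s - 1}\,p .
\]
The key point, which I expect to be the crux of the argument, is that the exponent appearing here equals exactly $\xi - 1$ on this event: the three block exponents collapse precisely into $(N(m-1)+(k-1))\tpoll + s - 1$, matching the expression for $\xi$ displayed above.

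It then remains to verify that the map $(m,k,s)\mapsto \xi = (N(m-1)+(k-1))\tpoll + s$ is a bijection from the index set $\{m\ge 1\}\times\{1,\ldots,N\}\times\{1,\ldots,\tpoll\}$ onto the positive integers. This is a mixed-radix counting argument carried out in two stages: as $(m,k)$ range over their values, $a := N(m-1)+(k-1)$ runs bijectively through $\{0,1,2,\ldots\}$ (with $m-1 = \lfloor a/N\rfloor$ and $k-1 = a \bmod N$), and then $\xi = a\tpoll + s$ runs bijectively through $\{1,2,\ldots\}$ as $a\ge 0$ and $s\in\{1,\ldots,\tpoll\}$ vary. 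Consequently, for each $t\ge 1$ there is exactly one admissible triple with $\xi = t$, whence $\PP(\xi = t) = (1-p)^{t-1}p$, which is precisely the law of $\tau^{1}$ by Theorem~\ref{theorem2}. The only genuine obstacle is the bookkeeping in the exponent collapse together with confirming that the bijection is onto all of $\N$ with no double counting; both are elementary once the event is set up correctly, and the $\tpoll=1$ case recovers the formula $\xi = N(\min_j\tau^{j}-1)+K$ anticipated in the introduction.
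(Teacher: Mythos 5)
Your proposal is correct and follows essentially the same route as the paper's proof: both compute $\PP\left(M=m,\,K=k,\,\tau^{K}-(M-1)\tpoll=s\right)$ by factorizing over the three groups of replicas ($j<k$, $j=k$, $j>k$), collapse the exponents to $(N(m-1)+(k-1))\tpoll+s-1$, and invoke the uniqueness of the mixed-radix representation of a positive integer (which you spell out in slightly more detail than the paper does). One cosmetic remark: the final appeal to Theorem~\ref{theorem2} is unnecessary, since $\PP(\tau^{1}=t)=(1-p)^{t-1}p$ follows directly from the hypothesis $\PP(\tau^{1}>t)=(1-p)^{t}$ stated in the lemma.
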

\begin{proof}
  Notice that $\xi$ can be rewritten as
$$\xi= N(M-1)\tpoll + (K-1)\tpoll + [\tau^{K}-(M-1) \tpoll].$$
Indeed, any natural number $z$ can be uniquely expressed as
$z=N(m-1)\tpoll + (k-1) \tpoll + t$ where $m \in \mathbb N \setminus
\{0\}$, $k \in \{1,\ldots,N\}$ and $t \in \{1,2,\ldots,\tpoll\}$. For
such $m$, $k$ and $t$ we compute
\begin{align*}
  &{\mathbb P}\left(\xi = N(m-1) \tpoll + (k-1) \tpoll + t\right) 
={\mathbb P}\left(M=m,\, K=k, \, \tau^K - (M-1)\tpoll = t\right) \\
  &= {\mathbb P}\left(\tau^{1} > m \tpoll,\, \ldots,\,\tau^{k-1}>m
    \tpoll,\, \tau^{k} = (m-1) \tpoll + t,\,
    \tau^{k+1} > (m-1) \tpoll, \ldots, \tau^{N} > (m-1) \tpoll\right)\\
  &= \mathbb P(\tau^{1} > m \tpoll)^{k-1} {\mathbb P}\left(\tau^{k} =
    (m-1) \tpoll+t\right)
  \left[{\mathbb P}(\tau^{k+1} > (m-1) \tpoll)\right]^{N-k}\\
  &= (1-p)^{(k-1)m\tpoll}p(1-p)^{(m-1)\tpoll+t-1}(1-p)^{(N-k)(m-1)\tpoll}\\
  &= p(1-p)^{N(m-1)\tpoll + (k-1)\tpoll + t-1}= {\mathbb
    P}\left(\tau^{1} = N(m-1)\tpoll + (k-1)\tpoll + t\right).
\end{align*}
\end{proof}

We can now proceed to the proof of Proposition~\ref{proposition1}.
\begin{proof} 
In light of Theorem~\ref{theorem2}, it suffices to prove:
  \begin{itemize}
  \item[(i)] $\tacc$ is a geometric random variable with parameter $p=
    {\mathbb P}^\nu(X_1 \notin S)$,
 \item[(ii)] $\xacc$ and $X_{\tau}$ have the same law: $\xacc\sim X_{\tau}$, and 
  \item[(iii)] $\tacc$ is independent of $\xacc$, 
  \end{itemize}
  where $(X_n)_{n\ge 0}$ is the process starting 
  at the $X_0 \sim \nu$. 

  We first prove {\em (i)}. For $j \in \{1,2,\ldots,N\}$, let
  $\tau^{j}$ be a random variable representing the first exit time
  from $S$ of the $j$th replica in the parallel step of ParRep, if the
  replica were allowed to keep evolving indefinitely. By (A2),
  $\tau^{1},\ldots,\tau^{N}$ are independent and all have the same
  distribution as $\tau$. 
  Now by Theorem~\ref{theorem2},
  $\tau^{1},\ldots,\tau^{N}$ are i.i.d. geometric random variables
  with parameter $p$, so by Lemma~\ref{lemma1}, $\tacc$ is also a
  geometric random variable with parameter $p$.

  Now we turn to {\em (ii)} and {\em (iii)}. Note that $K = k$ if and
  only if $\xacc = X_{\tau^k}^k$ and there exists $m \in {\mathbb N}$
  such that $\tau^{1} > m\tpoll,\ldots, \tau^{k-1}>m\tpoll$,
  $(m-1)\tpoll <\tau^{k} \le m\tpoll$, and $\tau^{k+1}
  >(m-1)\tpoll,\ldots,\tau^{N}>(m-1)\tpoll$. From
  Theorem~\ref{theorem2} and (A2), $X_{\tau^k}^k$ is independent of
  $\tau^1,\ldots,\tau^N$, so $\xacc$ must be independent
  of $K$. From this and (A2), it follows that $\xacc \sim
  X_{\tau}$. To see that $\xacc$ is independent
  of $\tacc$, let $\sigma(K,\tau^K)$ be the sigma algebra generated by
  $K$ and $\tau^K$. Knowing the value of $K$ and $\tau^K$ is
  enough to deduce the value of $\tacc$; that is, $\tacc$ is
  $\sigma(K,\tau^K)$-measurable. Also, by the preceding
  analysis and Theorem~\ref{theorem2}, $\xacc = X_{\tau^K}^K$ is
  independent of $\sigma(K,\tau^K)$. To conclude that $\tacc$ and
  $\xacc$ are independent, we compute for suitable test
  functions $f$ and $g$:
  \begin{align*}
    {\mathbb E}[f(\tacc)g(\xacc)]&= {\mathbb E}[{\mathbb
      E}[f(\tacc)g(\xacc)\,|\,\sigma(K,\tau^K)]]\\
    &= {\mathbb E}[f(\tacc){\mathbb
      E}[g(\xacc)\,|\,\sigma(K,\tau^K)]]= {\mathbb E}[f(\tacc)]\,
    {\mathbb E}[g(\xacc)].
  \end{align*}
\end{proof}

\section{Numerical Examples}\label{EXAMPLE}

In this section we consider two examples. The first illustrates
numerically the fact that the parallel step in Algorithm~\ref{algorithm1} 
is consistent. The second shows typical errors resulting from 
a naive application of the original ParRep algorithm to a 
time discretization of Langevin dynamics. These are simple
illustrative numerical examples. For a more advanced application, we
refer to the paper~\cite{Binder:aa}, where our Algorithm~\ref{algorithm1} 
was used to study the 2D Lennard-Jones cluster of seven atoms. 

\subsection{One-dimensional Random Walk}

Consider a random walk on ${\mathbb Z}$ with transition probabilities
$p(i,j)$ defined as follows:
\begin{equation*}
  p(i,j) =  \begin{cases}
    3/4,& i< 0 \hbox{ and } j=i+1,\\
    1/4, & i< 0 \hbox{ and } j=i-1,\\
    1/3,& i=0 \hbox{ and } |j| \le 1,\\
    1/4,& i> 0 \hbox{ and } j=i+1,\\
    3/4, & i> 0 \hbox{ and } j=i-1,\\
    0, &\hbox{otherwise}.
  \end{cases}
\end{equation*}
We use ParRep to simulate the first exit time $\tau$ of the
random walk from $S=[-5,5]$, starting from the QSD $\nu$ in $S$.
 At
each point except $0$, steps towards $0$ are more likely than
steps towards the boundaries $-5$ or $5$.

We perform this simulation by using the dephasing and parallel steps
of Algorithm~\ref{algorithm1}; for sufficiently large $\tphase$, the accelerated
time $\tacc$ should have the same law as $\tau$. In this simple example 
we can analytically compute the distribution of $\tau$. 
We perform $10^6$ independent ParRep simulations to obtain statistics
on the distribution of $\tacc$ and the gain in ``wall clock time,'' 
defined below.  We
find that $\tacc$ and $\tau$ have very close
probability mass functions when $\tphase = 25$; see Figure~\ref{fig1}. 
To measure the gain in wall clock efficiency
using ParRep, we introduce the {parallel time} $\tpar$ -- defined,
using the notation of Algorithm~\ref{algorithm1}, by $\tpar = M\tpoll$,
where we recall $M$ is such that $\tau^K \in  [(M-1)\tpoll+1,M\tpoll]$.  Thus,
the wall clock time of the parallel step is $C_0 T_{par}$, with $C_0$ 
the computational cost of a single time step of the Markov chain for
one replica.  Note in Figure~\ref{fig2}
the significant parallel time speedup in ParRep compared with the
direct sampling time. The speedup is approximately
linear in~$N$.

\begin{figure}[htbp]
  \begin{center}
    \includegraphics[width=12cm]{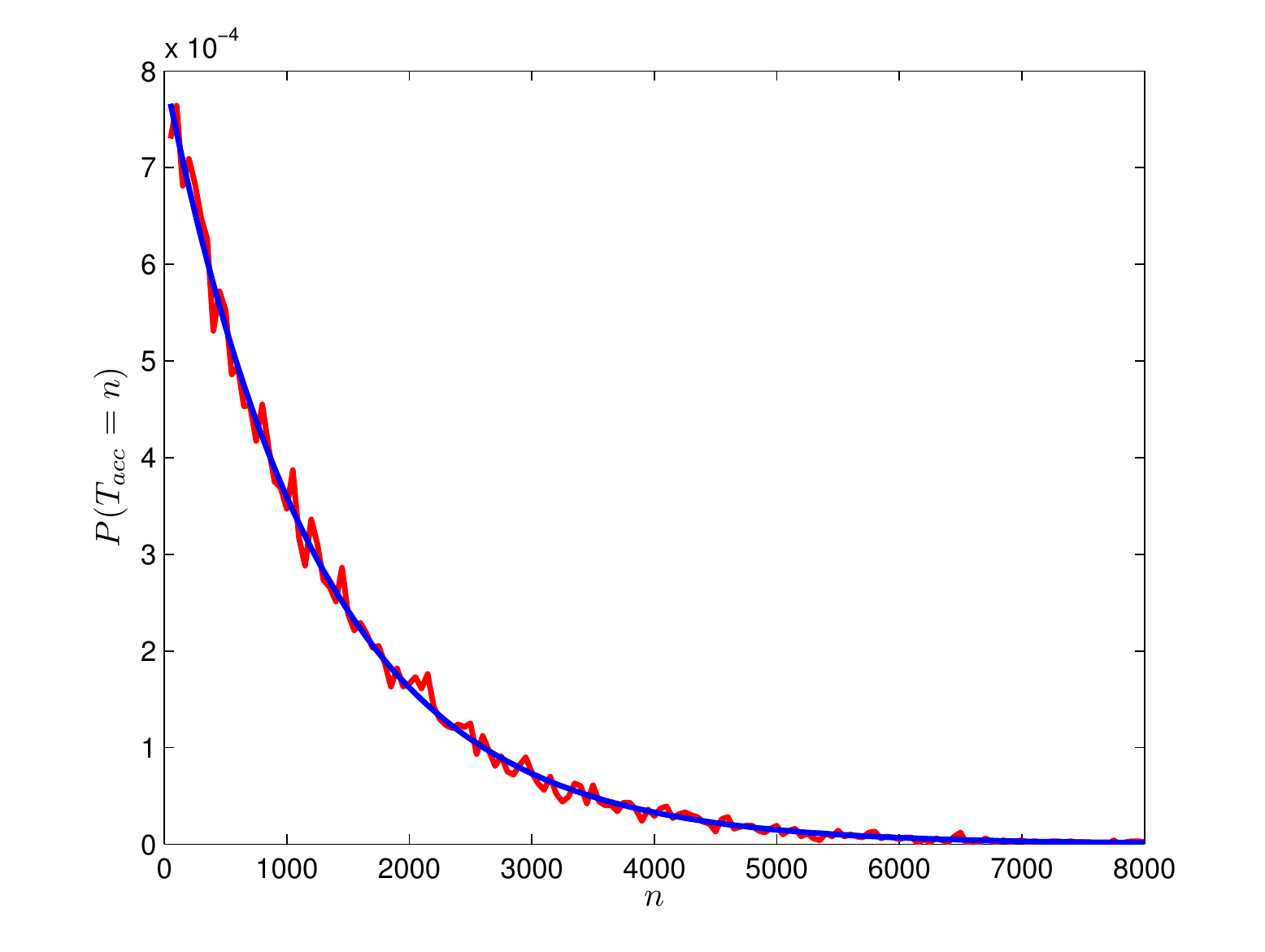}
  \end{center}
  \caption{Probability mass function of $\tacc$, estimated by $10^6$
    ParRep simulations with $N = 10$ replicas and $\tphase = \tcorr =
    25$, vs.  exact distribution of $\tau$ (smooth curve). }
  \label{fig1}
\end{figure}

\begin{figure}[htbp]
  \begin{center}
    \includegraphics[width=12cm]{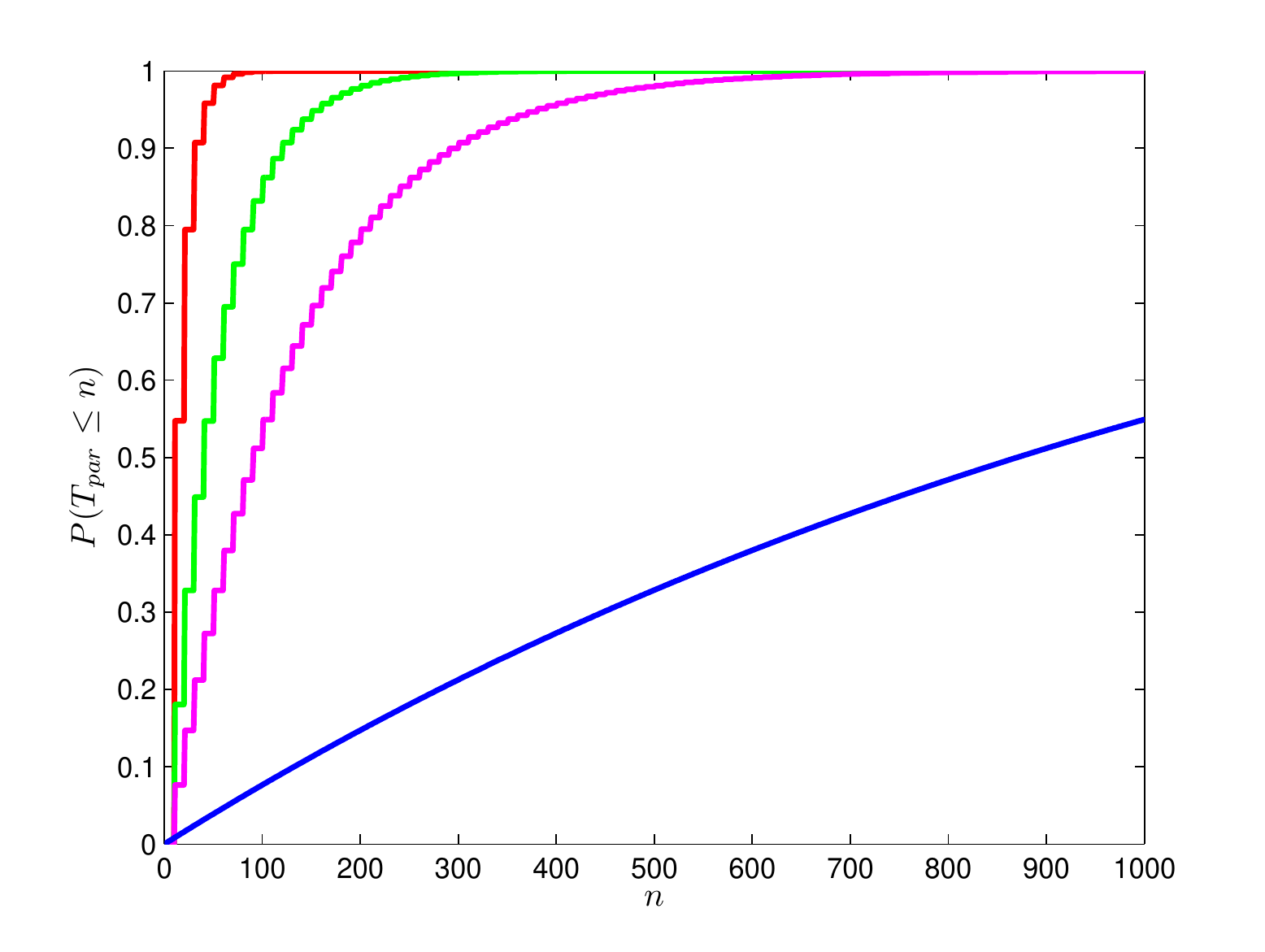}
  \end{center}
  \caption{Cumulative distribution function of parallel time required
    for ParRep sampling with $\tpoll = 10$ and, from top: $N = 100,
    25, 10$. The bottom curve is the (analytic) cumulative
    distribution function of $\tau$ (corresponding to $N=1$).}
  \label{fig2}
\end{figure}

\subsection{Discretized Diffusions}

Consider the overdamped Langevin stochastic process in ${\mathbb R}^d$,
\begin{equation}\label{ovdlang}
  d\tilde{X}_t = -\nabla V(\tilde{X}_t) dt + \sqrt{2\beta^{-1}} dW_t.
\end{equation}
The associated Euler-Maruyama discretization is
\begin{equation}
  \label{e:em_process}
  X_{n+1} = X_n - \nabla V(X_n) \Delta t + \sqrt{ 2\beta^{-1} \Delta t} \xi_n
\end{equation}
where $\xi_n \sim N(0,I)$ are $d$-dimensional i.i.d. random
variables. It is well-known~\cite{Kloeden} that $(X_n)_{n \ge 0}$ is then an
approximation of $(\tilde{X}_{n\Delta t})_{n \ge 0}$.

\subsubsection{Existence and uniqueness of the QSD}

We first show that the conditions in Assumption~\ref{assumption1} hold 
(see~\cite{delmoral} for a similar example in 1D):
\begin{proposition}\label{prop1}
  Assume $S \subset \R^d$ is bounded and $\nabla V$ is bounded on
  $S$. Then \eqref{e:em_process} satisfies Assumption
  \ref{assumption1}.
\end{proposition}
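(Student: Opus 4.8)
The plan is to verify the two conditions of Assumption~\ref{assumption1} directly for the Euler-Maruyama kernel \eqref{e:em_process}. The key observation is that this Markov chain has an explicit, everywhere-positive Gaussian transition density. Indeed, conditionally on $X_n = x$, the next state $X_{n+1}$ is normally distributed with mean $x - \nabla V(x)\Delta t$ and covariance $2\beta^{-1}\Delta t\, I$, so the transition kernel has density
\begin{equation*}
  k(x,y) = \paren{\frac{\beta}{4\pi \Delta t}}^{d/2}
  \exp\paren{-\frac{\beta}{4\Delta t}\,\abs{y - x + \nabla V(x)\Delta t}^2}
\end{equation*}
with respect to Lebesgue measure on $\R^d$. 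I would begin by writing down this density, since both conditions will follow from pointwise lower bounds on it.

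For condition (1), I would fix $x \in S$ and note that ${\mathbb P}^x(X_1 \in S) = \int_S k(x,y)\,dy$. Since $S$ is bounded and $\nabla V$ is bounded on $S$, the mean $x - \nabla V(x)\Delta t$ stays in a bounded region, so $k(x,y)$ is bounded below by a strictly positive constant uniformly over $x,y \in S$; integrating over $S$, which has positive Lebesgue measure as it is bounded with nonempty interior, gives a strictly positive probability. (One should assume $S$ has positive Lebesgue measure; I would state this or note it is implicit in the setup.)

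For condition (2), I would take $m = 1$ and produce a uniform two-sided bound on the density. Writing $R = \sup_{S}\paren{\abs{x} + \abs{\nabla V(x)}\Delta t}$, which is finite by hypothesis, and $D = \mathrm{diam}(S) + R$, the exponent $\abs{y - x + \nabla V(x)\Delta t}^2$ is bounded above by a constant $C_1$ and below by $0$ for all $x \in S$, $y \in S$. Hence there are positive constants $c_- \le c_+$ with $c_- \le k(x,y) \le c_+$ for all $x,y \in S$. Then for any bounded non-negative measurable $f$,
\begin{equation*}
  {\mathbb E}^x\bracket{f(X_1)1_{\{\tau > 1\}}} = \int_S f(y)\,k(x,y)\,dy
  \ge c_- \int_S f(y)\,dy \ge \frac{c_-}{c_+}\int_S f(y)\,k(z,y)\,dy
  = \frac{c_-}{c_+}\,{\mathbb E}^z\bracket{f(X_1)1_{\{\tau > 1\}}}
\end{equation*}
for all $x,z \in S$, so condition (2) holds with $m=1$ and $\delta = c_-/c_+ \in (0,1)$ (shrinking $\delta$ if necessary to ensure it is strictly less than $1$). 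Here the event $\{\tau > 1\}$ from $X_0 = x \in S$ is exactly $\{X_1 \in S\}$, which is why the indicator restricts the integral to~$S$.

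I do not expect a serious obstacle here: the argument is entirely driven by the nondegeneracy and continuity of the Gaussian density together with the boundedness hypotheses. The only point requiring mild care is making the constants genuinely uniform over $S$, which is exactly where the boundedness of $S$ and of $\nabla V$ on $S$ are used; if $\nabla V$ were unbounded or $S$ unbounded, the lower bound $c_-$ would degenerate to zero and the Doeblin-type condition (2) would fail. I would therefore emphasize the role of these hypotheses in securing the uniform lower bound on the density over the compact-in-effect region $\overline{S}$.
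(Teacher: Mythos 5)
Your proof is correct and takes essentially the same approach as the paper: both exploit the explicit, everywhere-positive Gaussian transition density of the Euler--Maruyama kernel and verify Assumption~\ref{assumption1} with $m=1$, the uniformity of the bounds coming exactly from the boundedness of $S$ and of $\nabla V$ on $S$. The only cosmetic difference is that you route condition (2) through a two-sided bound $c_- \le k(x,y) \le c_+$ with $\delta = c_-/c_+$, whereas the paper bounds the ratio of the two Gaussian exponentials directly by a constant $C>0$; the two arguments are equivalent (and your explicit flagging of the implicit requirement $|S|>0$ matches the paper's tacit use of it in condition (1)).
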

\begin{proof} First, for any $x \in S$, 
  \begin{equation}
    \begin{split}
      \mathbb{P}^x(X_1 \in S) = \E^x\bracket{1_{S}(X_1)}&= 
     (4\pi \beta^{-1}\Delta t)^{-d/2}\int_{\R^d}   1_S(y)    \exp\set{-\frac{\abs{y -x + \nabla V(x) \Delta t}^2}{4\beta^{-1} \Delta t} }dy\\
      & \geq |S|(4\pi \beta^{-1}\Delta t)^{-d/2} \min_{y\in S}\set{
        \exp\set{-\frac{\abs{y -x + \nabla V(x) \Delta
              t}^2}{4\beta^{-1} \Delta t} }}> 0.
    \end{split}
  \end{equation}
  Next, for any $x,y \in S$, 
  \begin{equation}
    \begin{split}
      \E^x\bracket{f(X_1)1_{\{\tau>1\}}} &= (4\pi \beta^{-1}\Delta t)^{-d/2}\int_S f(z)
      \exp\set{-\frac{\abs{z -x + \nabla V(x) \Delta
            t}^2}{4
          \beta^{-1} \Delta t} }dz\\
      & = (4\pi \beta^{-1}\Delta t)^{-d/2}\int_S f(z) \exp\set{-\frac{\abs{z -y + \nabla
            V(y) \Delta t}^2}{4
          \beta^{-1} \Delta t} }  \\
      &\quad \times \exp\set{-\frac{\abs{z -x + \nabla V(x) \Delta
            t}^2- \abs{z -y + \nabla V(y) \Delta t}^2}{4
          \beta^{-1} \Delta t}}dz\\
      &\geq C (4\pi \beta^{-1}\Delta t)^{-d/2}\int_S f(z) \exp\set{-\frac{\abs{z -y +
            \nabla V(y) \Delta t}^2}{4
          \beta^{-1} \Delta t} }  dz\\
      &\quad = C(4\pi \beta^{-1}\Delta t)^{-d/2} \E^y\bracket{f(X_1)1_{\{\tau>1\}}}
    \end{split}
  \end{equation}
  where
  \begin{equation*}
    C = \min_{x,y,z\in S} \exp\set{-\frac{\abs{z -x + \nabla V(x)
          \Delta t}^2- \abs{z -y + \nabla V(y)
          \Delta t}^2}{4
        \beta^{-1} \Delta t}}.
  \end{equation*}
  Since $S$ is bounded and terms in the brackets are bounded, $C>0$.
  In Assumption~\ref{assumption1} we can then take $m=1$ and $\delta =
  C(4\pi \beta^{-1}\Delta t)^{-d/2}$. 
\end{proof}

  Theorem~\ref{theorem1} ensures that $(X_n)_{n\ge 0}$ converges 
  to a unique QSD in $S$, with a precise error estimate in terms of
  the parameters $m$ and $\delta$ obtained in the proof 
  of Proposition~\ref{prop1} . This error estimate is certainly not
  sharp; better estimates can be obtained by studying the spectral
  properties of the Markov kernel. We refer to~\cite{Tony} for such convergence results in the continuous time case~\eqref{ovdlang}.

\subsubsection{Numerical example}\label{sec:diff1d}

Here we consider the 1D process
\begin{equation}
  \label{e:per1d}
  d\tilde{X}_t = - 2\pi \sin (\pi \tilde{X}_t) dt + \sqrt{2}dW_t, 
\end{equation}
discretized with $\Delta t = 10^{-2}$. We compute 
the first exit time from $S = (-1,1)$, starting at ${\tilde X}_0 = 1/2$. 
We use Algorithm~\ref{algorithm1} with 
$\tcorr = \tphase = 100$, corresponding to the
physical time scale $\tcorr \Delta t = \tphase \Delta t = 1$, 
and $N=1000$ replicas. 

Consider a direct implementation of the continuous time ParRep
algorithm into the time discretized
process. In that algorithm, the accelerated time is (in units of
physical time instead of time steps)
\begin{equation}
  \label{e:tacc_naive}
  \tacc^{\rm continuous} = N \tau^K  \Delta t,
\end{equation}
with $\tau^K$ the same as in Algorithm~\ref{algorithm1} above. As 
$\tacc^{\rm continuous}$ is by construction a multiple of $N \Delta t = 10$, 
a staircasing effect can be seen in the exit time distribution; see
Figure~\ref{f:per1d}.  This staggering worsens as the number of
replicas increases. In our Algorithm~\ref{algorithm1}, we use the accelerated
time formula (again in units of physical time)
\begin{equation*}
  \tacc^{\rm corrected} = \tacc \Delta t.
\end{equation*}
We find excellent agreement between
the serial data -- that is, the data obtained from direct numerical 
simulation -- and the data obtained from Algorithm~\ref{algorithm1}. 
See Figure~\ref{f:per1d}. (The agreement is perfect in the 
decorrelation step; see Figure~\ref{f:per1d_zoom}.) We comment further on this in the next section.

\begin{figure}
  \begin{center}
    \includegraphics[width=10cm]{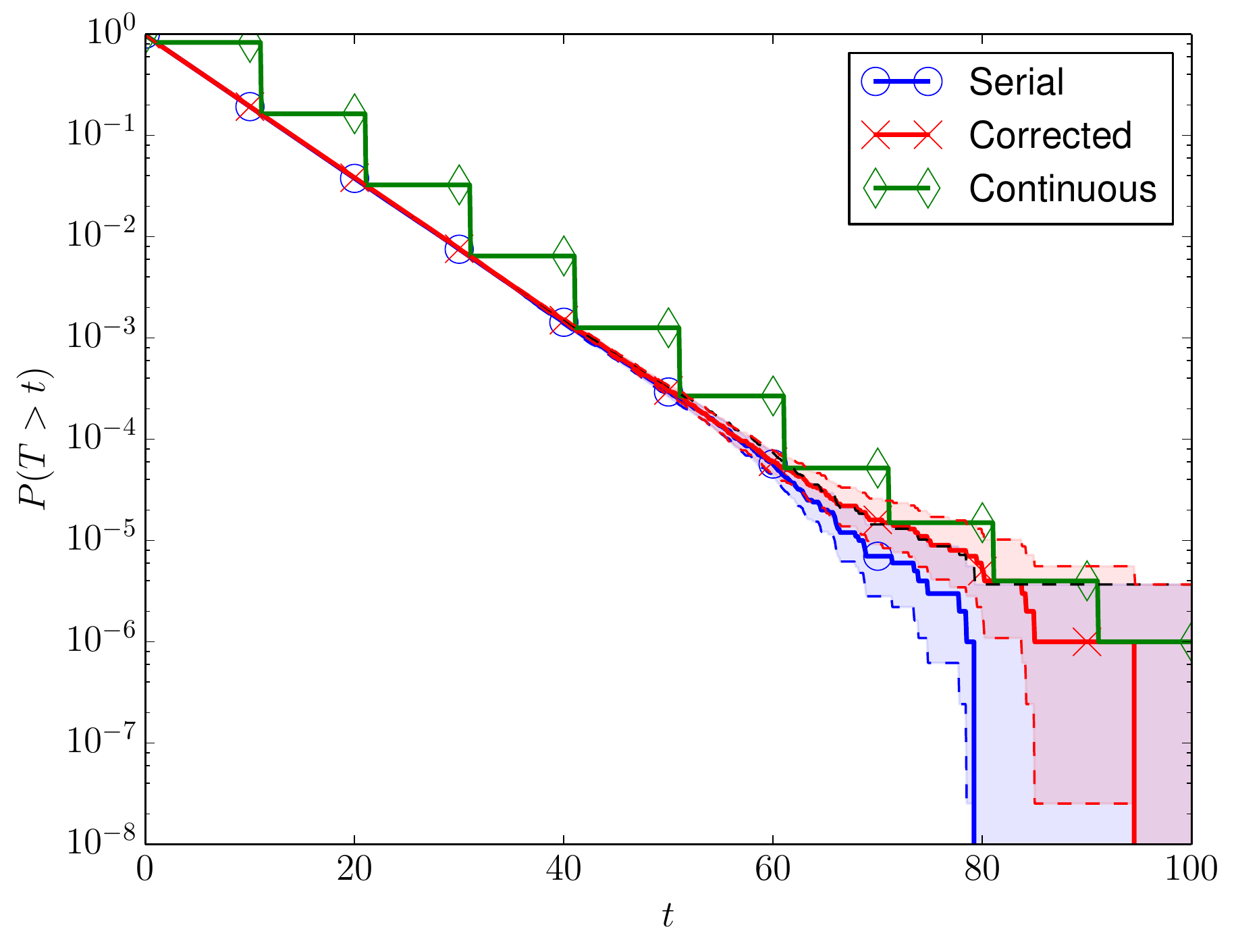}
  \end{center}
  \caption{Exit time distributions for the Euler-Maruyama
    discretization of \eqref{e:per1d}. Here $T$ represents 
    the first exit time from $S = (-1,1)$, starting at $1/2$. 
    There is excellent agreement between the serial, unaccelerated simulation
    data ($T = \tau^\nu \Delta t$) and our ParRep algorithm ($T =
    \tacc^{\rm corrected}$), while the original ParRep 
    formula ($T = \tacc^{\rm continuous}$) 
    deviates significantly. Dotted lines represent 
    95\% Clopper-Pearson confidence intervals obtained from 
    $10^6$ independent simulations; confidence interval widths increase in $t$ as 
    fewer samples are available.}
  \label{f:per1d}
\end{figure}

\begin{figure}
  \begin{center}
    \includegraphics[width=10cm]{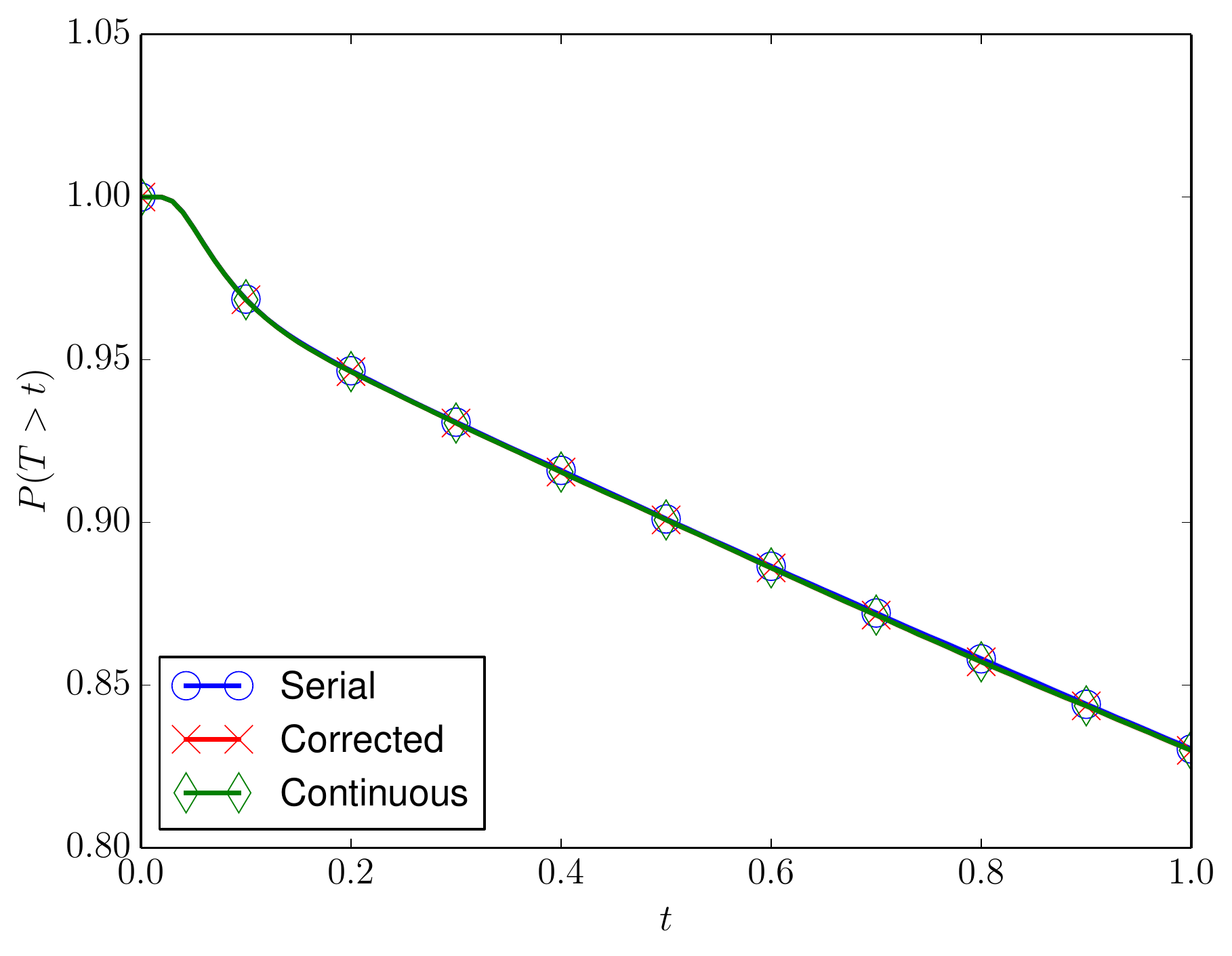}
  \end{center}
  \caption{A zoomed-in version of Figure~\ref{f:per1d}, 
highlighting the decorrelation step (recall $\tcorr \Delta t = 1$). 
    Serial
    simulation, our ParRep algorithm, and the original ParRep 
    algorithm all produce identical data.
    This comes from the fact that 
    serial and ParRep simulations are identical in law 
    during the decorrelation step. }
  \label{f:per1d_zoom}
\end{figure}

\subsubsection{Discussion}

In light of the discretization example, one may ask what kind of
errors were introduced in previous numerical studies which used ParRep
with \eqref{e:tacc_naive}. Taking $\tpoll =1$ for simplicity, we
calculate
\begin{equation*}
  \E\bracket{\abs{\tacc^{\rm corrected} - \tacc^{\rm continuous}}} = \E\bracket{\abs{ (N (\tau^K-1) + K)\Delta t - N 
      \tau^{K}\Delta t}}= \Delta t\,\E\bracket{\abs{N-K}}= \Delta t\sum_{k=1}^N (N-k) \PP(K=k).
\end{equation*}
Using calculations analogous to those used to study $\tacc$, it can be
shown that
\[
\PP(K=k) = \frac{(1-p)^{k-1}p}{1-(1-p)^N}.
\]
Therefore the error in the number of time steps per parallel step is
\begin{equation}
  \text{Absolute Error}  = \frac{N\Delta t}{1-(1-p)^N} - \frac{\Delta t}{p},\quad \text{Relative Error}  = \frac{pN}{1-(1-p)^N} -1.
\end{equation}
Consider the relative error, writing it as
\begin{equation*}
  pN \bracket{\frac{1}{1-r^N}- \frac{1}{(1-r)N}}, \text{ where } r=1-p.
\end{equation*}
We claim the quantity in the brackets,
\begin{equation}
  \label{e:relerr_prefactor}
  f(r,N):=  {\frac{1}{1-r^N}- \frac{1}{(1-r)N}} = \frac{r^N -N r + N-1}{Nr^{N+1} -
    N r^N - Nr +N},
\end{equation}
is bounded from above by one.  Indeed, for any $0<r<1$, we immediately
see that $f(r,N)$ is zero at $N=1$ and one as $N\to \infty$.  Let us
reason by contradiction and assume that $\sup_{r \in (0,1), N>0}
f(r,N) > 1$. Since $f$ is continuous in $N>0$ and $0<r<1$, there is
then a point $(r,N)$ such that $f(r,N)=1$; thus
\[
g_N(r) =0, \text{ where } g_N(r):= N r^{N+1} - (N+1)r^N +1.
\]
Note that $g_N(0) = 1$ and $g_N(1) = 0$ for all values of $N$.
Computing the derivative with respect to $r$, we observe
\[
g_N'(r) = -N(N+1)(1-r)r^{N-1}<0.
\]
Therefore, $g_N(r)$ is decreasing, from one at $r=0$ to zero at $r=1$,
in the interval $(0,1)$.  Hence, $g_N(r) =0$ has no 
solution, contradiction. We conclude that~\eqref{e:relerr_prefactor} is bounded from above by one.

Consequently, we are assured
\begin{equation}
  \text{Absolute Error} \leq N \Delta t, \quad \text{Relative Error} \leq pN.
\end{equation}
Thus, so long as $pN \ll 1$, the relative error using the 
accelerated time $\tacc^{\rm continuous}$ will be modest, especially for very metastable
states where $p\ll 1$. If also $N \Delta t \ll 1$, then the absolute
error will be small. 

The
above calculations are generic.  Though our discretized diffusion
example in Section~\ref{sec:diff1d} is a simple 1D problem, the errors 
displayed in Figure
\ref{f:per1d} are expected whenever the continuous time ParRep
rule~\eqref{e:tacc_naive} is used for a time discretized process. 
Though this error (as we showed above) will be small provided $Np \ll 1$ and
$N\Delta t \ll 1$, our Algorithm~\ref{algorithm1} 
has the advantage of being consistent for any $\Delta t$, 
including relatively large values of $N\Delta t$.

\section*{Acknowledgments}

We would like to thank the anonymous referees for their many
constructive remarks.
The work of {\sc D. Aristoff} and {\sc G. Simpson} was supported in
part by DOE Award DE-SC0002085.  {\sc G. Simpson} was also supported
by the NST PIRE Grant OISE-0967140. The work of {\sc T. Leli\`evre} is
supported by the European Research Council under the European Union's
Seventh Framework Programme (FP/2007-2013) / ERC Grant Agreement
number 614492.

\bibliographystyle{plain}

\end{document}